\newtheorem{theorem}{Theorem}
\newtheorem{lemma}[theorem]{Lemma}
\newtheorem{cor}[theorem]{Corollary}
\newtheorem*{add}{Addendum}
\newtheorem*{defn}{Definition}
\begin{document}

\title{ $PD_3$-complexes bound}

\author{Jonathan A. Hillman }
\address{School of Mathematics and Statistics\\
     University of Sydney, NSW 2006\\
      Australia }

\email{jonathanhillman47@gmail.com}

\begin{abstract}
We show that every $PD_3$-complex $P$ bounds a $PD_4$-pair $(Z,P)$.
If $P$ is orientable we may assume that $\pi_1(Z)=1$.
We show also that if the inclusion of $Z$ into $P$ induces an isomorphism on fundamental groups then $\pi_1(Z)$ is a free group,
and that every $PD_3$-complex with a manifold 1-skeleton is homotopy equivalent to a closed 3-manifold.
\end{abstract}

\keywords{aspherical, boundary,  $PD_3$-complex, $PD_3$-group, $\pi_1$-injective}

\subjclass{57M45, 57P10}

\maketitle

It is well known that  every closed connected 3-manifold bounds  
a compact smooth 4-manifold (which may be assumed orientable
if the 3-manifold is orientable).
This follows from the calculation of the bordism rings, 
but there are also {\it ad hoc} low-dimensional proofs \cite{MK}.
There is an analogous notion of $PD$-bordism (as studied in \cite{HV}).
Much of the published work on this topic (and related notions,
such as $PD$-surgery and transversality) 
was driven by the needs and results of high-dimensional manifold topology,
and we have not found an explicit treatment of the low-dimensional cases.

In the very lowest dimensions $n=1$ or 2 every $PD_n$-complex  
$X$ is homotopy equivalent to a closed $n$-manifold, 
and $X$ bounds if and only if the corresponding manifold bounds.
Our interest is in the case $n=3$.
In \S1 we show that every $PD_3$-complex $P$ is the range of a homology
equivalence $f:M\to{P}$ with domain a closed 3-manifold.
The union of the mapping cylinder of this map with a suitable 4-manifold bounded by $M$ is the ambient space of a $PD_4$-pair with boundary $P$.
Some argument is needed, since there are $PD_3$-complexes 
which are not homotopy equivalent to closed 3-manifolds.
We use special features of the low-dimensional case,
and leave aside the general problem of Poincar\'e duality bordism.

The next two sections consider bordisms with fundamental group constraints.
In \S2 we adapt an argument of R. J. Daverman to show that if 
a $PD_4$-pair $(X,\partial{X})$ has connected, non-empty boundary and the 
inclusion of $\partial{X}$ induces a fundamental group isomorphism then 
$\pi_1(\partial{X})$ is a free group.
Every aspherical 3-manifold is the $\pi_1$-injective boundary 
of an aspherical 4-manifold \cite{DJW}, 
and in \S3 we introduce ``injective bordism" of $PD_n$-groups,
to put the corresponding question for $PD_3$-groups in a wider context.

In the final section we consider another aspect of the structure of $PD_3$-complexes:
we show that if a $PD_3$-complex has a manifold 1-skeleton then 
it is homotopy equivalent to a closed 3-manifold.
In all other dimensions it was known that every $PD_n$-complex has a
manifold 1-skeleton.

I would like to thank I. Hambleton for pointing out a looseness 
in my handling of framings in Theorem 1, 
and M.Land for his argument for the 4-dimensional case.

\section{$PD_3$-complexes}

If $f:M\to{P}$ is a degree-1 map from a 3-manifold $M$ to a 
$PD_3$-complex $P$ then surgery may be used to improve it 
to a $\mathbb{Z}[\pi]$-homology equivalence,
provided an obstruction in $L_3(\mathbb{Z}[\pi])$ vanishes \cite{KT}.
Here we need only a $\mathbb{Z}$-homology equivalence,
and the issue of promoting a degree-1 map to a normal map does not arise.
(In fact the orientation characters determine the normal fibrations in 
this dimension.)
The arguments of \cite{CS} probably apply to this situation,
since $L_3(\mathbb{Z})=0$, but we shall use naive, 
unobstructed surgery below the middle dimension, 
as in Theorem 5.1 of \cite{KM}.

\begin{theorem}
Orientable $PD_3$-complexes bound orientably.
\end{theorem}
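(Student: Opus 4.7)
Following the outline in the introduction, I would produce the desired $PD_4$-pair $(Z,P)$ in three steps: (i) construct a closed oriented 3-manifold $M$ and a degree-one $\mathbb{Z}$-homology equivalence $f\colon M\to P$; (ii) bound $M$ by a compact oriented 4-manifold $W$; and (iii) let $Z:=W\cup_M M(f)$, where $M(f)$ is the mapping cylinder of $f$, and verify that $(Z,P)$ is a $PD_4$-pair with $Z$ oriented.

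For step (i), the fundamental class $[P]\in H_3(P;\mathbb{Z})\cong\mathbb{Z}$ is represented by a degree-one map $f_0\colon M_0\to P$ from a closed oriented 3-manifold, because $\Omega_3^{SO}$ of a point vanishes and the Atiyah-Hirzebruch spectral sequence therefore identifies $\Omega_3^{SO}(P)$ with $H_3(P;\mathbb{Z})$. I would then apply naive surgery below the middle dimension, in the spirit of Theorem~5.1 of \cite{KM}, to improve $f_0$ to a $\mathbb{Z}$-homology equivalence. Some 0-surgeries (connected sums with $S^1\times S^2$) first arrange for $f_*$ to become surjective on $\pi_1$; then 1-surgeries on embedded loops in $\ker(\pi_1(M)\to\pi_1(P))$ whose $H_1$-classes generate $\ker(H_1(M;\mathbb{Z})\to H_1(P;\mathbb{Z}))$ make $f_*$ an isomorphism on $H_1$. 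Since $M$ is oriented, these circles have trivial normal bundles, and the surgeries are unobstructed. Poincaré duality on $M$ and $P$ together with the degree-one condition then promotes the $H_1$-isomorphism to an isomorphism in all degrees. Step (ii) is classical: every closed oriented 3-manifold bounds an oriented 4-manifold, for instance via the trace of a Lickorish surgery presentation.

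For step (iii), since $M(f)$ deformation retracts onto $P$, excision yields $H_*(Z,P;\mathcal{L})\cong H_*(W,M;\mathcal{L})$ for any local coefficient system $\mathcal{L}$ on $Z$, and the Poincaré-Lefschetz duality of the manifold pair $(W,M)$ should transfer via the homology equivalence $f$ to a duality on $(Z,P)$. I expect this last transfer to be the main obstacle: because $f$ is \emph{not} in general a $\pi_1$-isomorphism---otherwise $P$ would be homotopy equivalent to the 3-manifold $M$, contrary to the known existence of $PD_3$-complexes that are not homotopy equivalent to 3-manifolds---one must argue carefully that duality over $\mathbb{Z}[\pi_1(W)]$ for $(W,M)$ lifts to duality over the amalgamated product $\pi_1(Z)=\pi_1(W)*_{\pi_1(M)}\pi_1(P)$. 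Orientability of $Z$ then follows from that of $W$ together with the degree-one property of $f$.
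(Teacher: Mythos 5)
Your overall architecture (degree-one map from a closed oriented $3$-manifold, surgery to a $\mathbb{Z}$-homology equivalence, cap off with a $4$-manifold $W$, glue on the mapping cylinder) is the same as the paper's, but the step you yourself flag as ``the main obstacle'' is a genuine gap, and your proposed way around it would not work. A $PD_4$-pair must satisfy duality with local coefficients in $\mathbb{Z}[\pi_1(Z)]$, and since $g$ is only a $\mathbb{Z}$-homology equivalence (not a $\mathbb{Z}[\pi]$-homology equivalence), there is no way to ``transfer'' equivariant duality across the mapping cylinder to the amalgamated product $\pi_1(W)*_{\pi_1(M)}\pi_1(P)$. The paper's resolution is simpler and you missed it: first perform elementary surgeries on a generating set for $\pi_1(W)$ so that $\pi_1(W)=1$; then, because $\pi_1(g)$ is an epimorphism (automatic for a degree-one map to a $PD_3$-complex), Van Kampen gives $\pi_1(Z)=1$, so only simple integer coefficients need to be checked, and the duality for $(Z,P)$ follows from naturality of the cap product under the homology isomorphisms $j\colon W\to Z$ and $J\colon (W,M)\to (Z,P)$. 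Note that this is why the theorem only asserts that $(Z,P)$ is a $PD_4$-pair with $\pi_1(Z)=1$, not that $Z$ carries the fundamental group you wrote down.

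Your step (i) is also too quick at exactly the point where the paper does real work. Surgery on an embedded loop $L$ kills the class of the chosen \emph{longitude} $\lambda$ in $H_1(\overline{M\setminus N};\mathbb{Z})$, and the resulting effect on $H_1(M;\mathbb{Z})$ depends on the framing. When $L$ generates an infinite direct summand of the kernel $K_1$, the meridian bounds in the complement (via a dual surface), and the surgery does kill $[L]$ as you say. But when $L$ represents a \emph{torsion} summand of $K_1$, the meridian has infinite order in $H_1(\overline{M\setminus N};\mathbb{Z})$, and no single surgery simply deletes the torsion class: the paper must choose the longitude so that its image in $H_1(\overline{M\setminus N};\mathbb{Z})$ has finite order, whereupon the surgery trades torsion for rank ($H_1(M'';\mathbb{Z})/\langle\mu\rangle\cong H_1(M;\mathbb{Z})/\langle\lambda\rangle$), and one then iterates with the infinite-order argument to remove the new free summand. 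Your assertion that surgeries on loops generating $K_1$ ``make $f_*$ an isomorphism on $H_1$'' elides this framing issue entirely, which is the other substantive content of the proof (and the point the author credits Hambleton with catching).
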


\begin{proof}
Let $P$ be a $PD_3^+$-complex.
The fundamental class $[P]$ may be represented by a 3-cycle 
$\Sigma\psi_i$, where each summand $\psi_i$ is a singular 3-simplex.
Since $\partial\Sigma\psi_i=0$,
the faces of the summands must match in pairs.
Choosing such a pairing gives a map $f:E\to{P}$ with domain 
a finite 3-complex which is an orientable  3-manifold away from its vertices. 
The finitely many non-manifold points have 
links which are aspherical orientable surfaces.
After replacing conical neighbourhoods of such vertices by handlebodies,
we obtain a degree-1 map $g:M\to{P}$ with domain 
a closed orientable 3-manifold. 
(The existence of such a map also follows from the Atiyah-Hirzebruch 
spectral sequence for oriented bordism.)

Since $g$ is a degree-1 map,
$\pi_1(g)$ is an epimorphism, 
and so it maps $\pi_1(M)'$ onto $\pi_1(P)'$.
Hence the Hurewicz homomorphism maps $\mathrm{Ker}(\pi_1(g))$
onto the ``homology surgery kernel" $K_1=\mathrm{Ker}(H_1(g;\mathbb{Z}))$. 
Moreover, $K_1$ is a direct summand of $H_1(M;\mathbb{Z})$.
Let $L:S^1\to{M}$ be an embedding which represents a generator of 
a direct summand of $K_1$, 
and let $N\cong{S^1}\times{D^2}$ be a regular neighbourhood of $L$.
Then $H_1(\partial{N};\mathbb{Z})$ has a longitude-meridian basis 
$\{\lambda,\mu\}$,
where $\lambda$ is freely homotopic in $M$ to $L$
and $\mu$ bounds a transverse disc.
(This property characterizes the meridian, up to sign.
There is no canonical choice of longitude,
and we shall make use of some freedom of choice below.)
Since $g\circ{L}$ is null-homotopic, so are $g|_N$ and $g|_{\partial{N}}$.
Therefore $g|_{\overline{M\setminus{N}}}$ extends to a degree-1 map
$g':M'\to{P}$,  for any 3-manifold $M'$ obtained by Dehn surgery on $L$.

If the homology class of $L$ has infinite order then there is 
a closed orientable surface $S$ in $M$ which meets 
the image of $L$ transversely in one point, by Poincar\'e duality.
We may assume that  $\partial(S\cap{N})=\mu$.
Hence $\mu$ is null-homologous in $\overline{M\setminus{N}}$,
since it bounds $\overline{S\setminus{N}}$.
Let $\phi:\partial{D^2}\times{S^1}\to\partial\overline{M\setminus{N}}$
be a homeomorphism which maps $\partial{D^2}\times{1}$ to $\lambda$,
and let $M'=\overline{M\setminus{N}}\cup_\phi{D^2}\times{S^1}$.
Since $g|_{\partial{N}}$ is null-homotopic,
$g|_{\overline{M\setminus{N}}}$ extends to a degree-1 map
$g':M'\to{P}$, and $\lambda$ and $\mu$ are each null-homologous in $M'$.
Hence $H_1(M';\mathbb{Z})\cong{H_1(M;\mathbb{Z})}/\langle\lambda\rangle$. 
Proceeding in this way, we may arrange that $K_1$ is finite.

If $L$ represents a finite direct summand of $K_1$ then
the image of the meridian $\mu$  in 
$H_1(\overline{M\setminus{N}};\mathbb{Z})$ has infinite order.
(For if the image of $\mu$ has finite order $s$ in 
$H_1(\overline{M\setminus{N}};\mathbb{Z})$ then $s\mu$ 
would bound a surface $F\in\overline{M\setminus{N}}$, 
and so $L$ would have non-zero intersection number $\pm{s}$ 
with the closed surface $\widehat{F}=F\cup{s}D^2$.
Hence the image of $L$ has infinite order in $H_1(M;\mathbb{Z})$.)
It is an easy consequence of Poincar\'e duality that 
the image of $H_1(\partial{N};\mathbb{Q})\cong\mathbb{Q}^2$ in
$H_1(\overline{M\setminus{N}};\mathbb{Q})$ has rank 1.
Hence we may choose the longitude $\lambda$ so that its image
in $H_1(\overline{M\setminus{N}};\mathbb{Z})$ has finite order.
Let $M''=M\cup_f{S^1\times{D^2}}$, 
where $f$ maps the first factor of $S^1\times{S^1}$
to $\mu$ and the second to $\lambda$.
Then $g$ extends to a degree-1 map from $M''$ to $P$
and $H_1(M'';\mathbb{Z})/\mathrm{Im}(\langle\mu\rangle)\cong
{H_1(M;\mathbb{Z})/\mathrm{Im}(\langle\lambda\rangle)}$.
This reduces the torsion subgroup of the homology surgery kernel,
at the cost of increasing the rank.
We then apply the earlier argument, 
to reduce the rank without further changing the torsion subgroup.
After several iterations of these steps,
we reduce the homology surgery kernel to 0.

Thus we may assume that  $H_1(g;\mathbb{Z})$ is an isomorphism.
Hence $g$ induces isomorphisms  on homology in all degrees.
Let $W$ be a compact orientable 4-manifold with boundary $M$.
After elementary surgeries on a basis for $\pi_1(W)$,
we may assume also that $\pi_1(W)=1$.
Let $MCyl(g)$ be the mapping cylinder of $g$ and let $Z=W\cup_MMCyl(g)$.
Then $\pi_1(Z)=1$, since $\pi_1(g)$ is an epimorphism,
and the inclusions $j:W\to{Z}$ and $J:(W,M)\to(Z,MCyl(g))\simeq(Z,P)$
induce isomorphisms on homology.
Since $J_*(j^*\xi\cap[W,M])=\xi\cap{J_*[W,M]}$
for all $\xi\in{H^i(Z;\mathbb{Z})}$ and $i\geq0$
and since $(W,M)$ is an orientable 4-manifold pair,
it follows that $(Z,P)$ is an orientable $PD_4$-pair with boundary $P$.
\end{proof}

Arranging that $g$ be an integral homology equivalence seems
to be a necessary first step.
If a pair $(X,Y)$ satisfies Poincar\'e-Lefshetz duality 
with simple coefficients $R$ and one component $Y_1$ of $Y$ is 
a deformation retract of $X$ then it follows from the long exact sequence 
of homology for the pair and duality that $Y$ must have two components,
$Y=Y_1\sqcup{Y_2}$ say, 
and that $H_i(Y_2;R)\cong{H_i(Y_1;R)}\cong{H_1(X;R)}$ for all $i\geq0$.
On the other hand,  the inclusion of $Y_2$ into $X$ need not induce an isomorphism on fundamental groups.
Every homology 3-sphere $\Sigma$ bounds a contractible 4-manifold, 
$C$ say \cite[Corollary 9.3C]{FQ},
and removing a 4-ball from the interior of $C$ leaves
an homology cobordism from $\Sigma$ to $S^3$.

In general,  $(MCyl(g), M\sqcup{P})$ need not be a $PD_4$-pair,
even if $g$ is an integral homology equivalence.
Let $M$ be the flat 3-manifold with holonomy of order 6.
Then there is an integral homology equivalence $g:M\to {P}=S^2\times{S^1}$.
The mapping cylinder $MCyl(g)$ fibres over $S^1$,
with fibre $MCyl(g')$ the mapping cylinder of the degree-1 collapse 
$g':T\to{S^2}$.
If $(MCyl(g), M\sqcup{P})$ were a $PD_4$-pair then 
$(MCyl(g'), T\sqcup{S^2})$ would be a 1-connected $PD_3$-pair.
But this is clearly not the case.

We make no use of the fact that a $PD_3$-complex is finitely dominated,
and much of the above argument extends to the non-orientable case.
Here we must first find a 
$\mathbb{Z}[\mathbb{Z}^\times]$-homology equivalence.
(We note also that $L_3(\mathbb{Z}[\mathbb{Z}^\times],w)=0$
if $w$ is nontrivial,
but we do not use this fact.)

\begin{add}
Every non-orientable $PD_3$-complex bounds.
\end{add}

\begin{proof}
Let $P$ be a $PD_3$-complex such that $w=w_1(P)\not=0$.
We could represent a generator of $H_3(P;\mathbb{Z}^w)$ 
by a  geometric cycle $f:E\to{P}$ with singularities only at the vertices.
However in this case it is not obvious that the links of the vertices 
have even Euler characteristic.
Instead we appeal to the Atiyah-Hirzebruch spectral sequence
for $w$-twisted bordism.
This gives a 3-manifold $M$ and a map ${g:M\to{P}}$ such that 
$g^*w=w_1(M)$ and  $H_3(g;\mathbb{Z}^w)$ is an isomorphism.
The surgery kernel $\mathrm{Ker}(\pi_1(g))$ is
represented  by knots with product neighbourhoods.
Let $M^+$ be the orientable 2-fold covering space of $M$.
After elementary surgeries as in the theorem, 
we may assume that the image of $\mathrm{Ker}(\pi_1(g))$ in
$H_1(M^+;\mathbb{Z})$ is trivial, 
and so $g$ is a $\mathbb{Z}[\mathbb{Z}^\times]$-homology equivalence.
We may also assume that $M=\partial{W}$ where 
$w_1(W):\pi_1(W)\to\mathbb{Z}^\times$ is an isomorphism.
The rest of the argument is then as in the theorem.
\end{proof}

It is unlikely that such arguments extend much further.
There are non-orientable $PD_n$-complexes and orientable $PD_{n+1}$-complexes whose Spivak normal bundle has no TOP reduction, for all $n\geq4$ \cite{Ha,La17}.
Such complexes admit no degree-1 normal maps with domain a
closed $n$-manifold. 

However, M. Land has suggested the following argument for
the case $n=4$.
If $P$ is a $PD_n$-complex for some $n\geq4$ then $P\simeq{H}\cup{Y}$,
where $H$ is a 1-handlebody,
$(Y,\partial{H})$ is a $PD_n$-pair and the inclusion induces 
an epimorphism from $\pi_1(H)$ onto $\pi_1(P)$ \cite[Lemma 2.8]{Wa}.
We may perform elementary surgeries on a basis for $\pi_1(H)$ inside
the manifold 1-skeleton $H$, 
and the trace $W$ of the surgeries is a $PD$-bordism from $P$ 
to a 1-connected $PD_n$-complex.
If $P$ is orientable then $W$ is orientable.
Every 1-connected $PD_4$-complex is homotopy equivalent to a
TOP 4-manifold \cite[\S11.4]{FQ}.
Now $\Omega_4^{STOP}\cong\mathbb{Z}\oplus\mathbb{Z}/2\mathbb{Z}$,
where the first summand is detected by the signature $\sigma$ and
the second by the Kirby-Siebenmann invariant $KS$.
Let $*CP^2$ be the fake projective plane,
with $KS(*CP^2)\not=0$,
and let $X=\overline{CP^2}\#*CP^2$.
Then $\sigma(X)=0$ and $KS(X)\not=0$.
The signature is an invariant of PD bordism, 
but $X$ bounds as a $PD_4$-complex, since $*CP^2\simeq{CP^2}$.
Hence the signature defines an isomorphism 
$\Omega_4^{SPD}\cong\mathbb{Z}$.
In the non-orientable case we find that 
$\Omega_4^{PD}\cong(\mathbb{Z}/2\mathbb{Z})^2$,
detected by SW numbers.

\section{fundamental group isomorphisms}

R. J.  Daverman showed that if a compact 4-manifold $M$ has connected,
non-empty boundary  and the inclusion of the boundary induces an 
isomorphism of fundamental groups then $\pi_1(\partial{M})$ is a free group. 
We shall show that a similar statement holds for $PD_4$-pairs.
The key observation of \cite{Da94} is that since $Y$ is a boundary, 
the image of $H_3(Y;F)$ in $H_3(X;F)$ is 0, for all simple coefficients $F$.
The argument is then essentially homological.
The only novelty in our situation is that there are indecomposable $PD_3$-complexes with infinite, virtually free fundamental groups
\cite[Chapters 6 and 7]{Hi}.

\begin{lemma}
\label{Davor}
Let $P$ be an indecomposable orientable $PD_3$-complex such that 
$\pi=\pi_1(P)$ is virtually free but not free.
Then $\pi$ has a subgroup of finite index which has a nontrivial finite 
cyclic group as a free factor.
\end{lemma}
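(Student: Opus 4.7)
The plan is to construct a finite-index subgroup $\pi_0\le\pi$ whose action on the Bass-Serre tree of $\pi$ has trivial edge stabilizers and at least one vertex stabilizer of prime order $p$; Bass-Serre theory will then exhibit $\pi_0$ as a free product of finite cyclic groups of order $p$ with a free group, and any nontrivial cyclic factor of such a decomposition will be the desired free factor.

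First, I would invoke the Stallings-Dunwoody structure theorem to write $\pi=\pi_1(\mathcal{G})$ for a finite reduced graph of finite groups $\mathcal{G}$. Since $\pi$ is not free, some vertex group is nontrivial, so $\pi$ contains a torsion element; fix one such element $h$ of prime order $p$ lying in a vertex group $H$. The goal is to choose $h$ (and $\mathcal{G}$) so that no $\pi$-conjugate of $h$ lies in any edge group of $\mathcal{G}$. Using the residual finiteness of virtually free groups, I would then select a finite quotient $\phi\colon\pi\to Q$ which is injective on each (conjugacy class of) edge group and for which $\phi(h)$ lies outside every $Q$-conjugate of every $\phi(G_e)$. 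Set $\pi_0:=\phi^{-1}(\langle\phi(h)\rangle)$; this has finite index $[Q:\langle\phi(h)\rangle]$ in $\pi$. The injectivity of $\phi$ on edge groups forces $\ker\phi$ to be torsion-free, so on the Bass-Serre tree $T$ for $\mathcal{G}$ the $\pi_0$-stabilizer of any vertex injects into $\pi_0/\ker\phi\cong\mathbb{Z}/p$, while the separation property makes every $\pi_0$-edge stabilizer trivial. Bass-Serre theory therefore writes $\pi_0$ as a free product $V_1*\cdots*V_k*F$ with each $V_i\in\{1,\mathbb{Z}/p\}$ and $F$ free, and since $h\in\pi_0$ has order $p$ at least one $V_i\cong\mathbb{Z}/p$, yielding the required finite cyclic free factor.

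The main obstacle is the combined choice of $h$ and $p$. A priori it could happen that every torsion element of every vertex group of $\mathcal{G}$ is $\pi$-conjugate into some edge group, in which case no $\phi$ of the kind needed exists. This is where the $PD_3$-hypothesis and the indecomposability of $P$ are essential: the classification of indecomposable orientable $PD_3$-complexes with virtually free fundamental group in \cite[Chapters 6 and 7]{Hi} restricts the vertex groups to finite groups with periodic cohomology and constrains the edge groups to be genuinely smaller proper subgroups, so that some vertex group is not covered by its adjacent edge groups. Verifying, case by case in that classification, that a suitable $h$ and $p$ can always be selected is the technical heart of the argument.
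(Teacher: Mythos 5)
Your overall strategy (pass to a finite-index subgroup acting on the Bass--Serre tree with trivial edge stabilizers and a surviving prime-order vertex stabilizer) is a reasonable general-purpose route, but as written the argument has a genuine gap, and it sits exactly where you place it: the existence of a torsion element $h$ of prime order $p$ no $\pi$-conjugate of which lies in an edge group. You defer this to an unperformed ``case by case'' check against the classification, and everything downstream depends on it; a priori a vertex group could be covered by $\pi$-conjugates of the edge groups (a finite group cannot be covered by conjugates of a single proper subgroup, but it can be covered by conjugates of several), so the claim is not automatic and is precisely the content that needs the $PD_3$ input. There is also a smaller slip: injectivity of $\phi$ on the edge groups does not make $\ker\phi$ torsion-free, since torsion elements of $\pi$ are conjugate into \emph{vertex} groups; you need $\phi$ injective on the (finitely many) vertex groups to conclude that each $\pi_0$-vertex stabilizer injects into $\mathbb{Z}/p\mathbb{Z}$. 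Finally, the separation step quietly uses that the union of conjugates of a finite subgroup is closed in the profinite topology of a virtually free group; this is true but should be cited.

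For comparison, the paper avoids all of this by using the explicit structure theorem \cite[Theorem 6.10]{Hi}: $\pi=\pi\mathcal{G}$ for a finite \emph{tree} of finite groups in which all but at most one edge group is $\mathbb{Z}/2\mathbb{Z}$ and all but at most two vertex groups are dihedral $D_{2m_i}$ with $m_i\geq3$ odd. One then takes $K$ to be the normal closure of the union of the commutator subgroups $G_i'$ for $i<n$; since each $G_i/G_i'$ collapses onto the adjacent edge groups, $\pi/K\cong G_n$ is finite, and each $G_i'\cong\mathbb{Z}/m_i\mathbb{Z}$ (odd order, hence meeting the order-$2$ edge groups trivially) is a nontrivial finite cyclic free factor of $K$. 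This is essentially the special case of your argument in which the ``hard'' existence statement is read off directly from the classification; if you carry out your deferred verification you will find yourself reproving exactly this. To repair your write-up, either import the structure theorem and exhibit $h$ explicitly (e.g.\ an element of odd prime order in some $G_i'$, checking it avoids the one possibly larger edge group), or replace the residual-finiteness detour by the paper's direct construction of $K$.
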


\begin{proof}
We have $\pi=\pi\mathcal{G}$, where $(\mathcal{G},\Gamma)$ is  a 
 finite graph of finite groups,
in which the underlying graph $\Gamma$ is a tree, 
with $n$ vertices say,
and at most one of the edge groups is not $\mathbb{Z}/2\mathbb{Z}$
\cite[Theorem 6.10]{Hi}.
If $\pi$ is finite then the claim is clear, so we
we may assume that $n\geq2$.
We may also assume that the first $n-2$ vertex groups are dihedral groups
$G_i\cong{D_{2m_i}}$, where $m_i\geq3$ and is odd,
while $G_{n-1}\cong{D_{2m}}$ or $D_{2m}\times\mathbb{Z}/3\mathbb{Z}$,
and $G_n$ has cohomological period dividing 4.
Let $K$ be the normal closure of the union of the commutator subgroups 
$\cup_{i<n}G_i'$. Then $\pi/K\cong{G_n}$, and so $K$ has finite index in $\pi$.
Each $G_i'$ with $i<n$ is cyclic, and is a free factor of $K$.
\end{proof}
 
\begin{lemma}
\label{Davfree}
Let $P$ be an indecomposable $PD_3$-complex such that $\pi=\pi_1(P)$ 
has a free subgroup of index $\leq2$.
Then $P$ is homotopy equivalent to one of $S^3$, $RP^3$, 
$RP^2\times{S^1}$, $S^2\times{S^1}$ or $S^2\tilde\times{S^1}$. 
\end{lemma}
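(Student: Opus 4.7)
My plan is to first restrict the possibilities for $\pi$ using the structure theory of virtually free groups together with the indecomposability of $P$, and then identify the homotopy type of $P$ case by case.

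For the first step, the finitely presented virtually free group $\pi$ is the fundamental group of a finite graph of finite groups $(\mathcal{G},\Gamma)$, by Karrass--Pietrowski--Solitar. The hypothesis that $\pi$ has a free subgroup of index at most $2$ forces every finite subgroup of $\pi$ (and in particular every vertex and edge group) to have order dividing $2$. By the connected-sum decomposition theorem for $PD_3$-complexes (\cite{Hi}), indecomposability of $P$ is equivalent to $\pi$ being indecomposable as a nontrivial free product, so $\Gamma$ reduces to a single vertex with at most one loop. Enumerating the possibilities (trivial graph; one loop with trivial edge group; one loop with edge group $\mathbb{Z}/2\mathbb{Z}$) and discarding $\mathbb{Z}/2\mathbb{Z}*\mathbb{Z}$ as decomposable, one obtains $\pi\in\{1,\,\mathbb{Z}/2\mathbb{Z},\,\mathbb{Z},\,\mathbb{Z}/2\mathbb{Z}\times\mathbb{Z}\}$.

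For the second step the cases $\pi=1$, $\mathbb{Z}/2\mathbb{Z}$ and $\mathbb{Z}$ are essentially standard. If $\pi=1$, then $P\simeq S^3$ by Hurewicz and Whitehead. If $\pi=\mathbb{Z}/2\mathbb{Z}$, the universal cover is homotopy equivalent to $S^3$; a nontrivial orientation character would force a free self-homotopy-equivalence of $S^3$ of degree $-1$, contradicting the Lefschetz fixed-point theorem (the Lefschetz number is $2$), so $P$ is orientable, and the standard Postnikov reconstruction for $PD_3$-complexes with finite cyclic fundamental group yields $P\simeq RP^3$. If $\pi=\mathbb{Z}$, then $\widetilde P\simeq S^2$ by Poincar\'e duality with $\mathbb{Z}[\pi]$-coefficients, and the orientation character of $P$ distinguishes $S^2\times S^1$ from $S^2\tilde\times S^1$.

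The remaining case $\pi=\mathbb{Z}/2\mathbb{Z}\times\mathbb{Z}$ is the main obstacle, and I would handle it by passing to the double cover $P^+\to P$ corresponding to the $\mathbb{Z}$ factor. Then $P^+$ is a $PD_3$-complex with $\pi_1(P^+)=\mathbb{Z}$, so by the previous case $P^+\simeq S^2\times S^1$ or $S^2\tilde\times S^1$. A direct analysis of the free covering involution, combined with the requirement that the induced orientation character on $\pi$ be consistent with Poincar\'e duality, rules out all possibilities except the antipodal action on the $S^2$ factor of $S^2\times S^1$, yielding $P\simeq RP^2\times S^1$. This also excludes the orientable case, since a free orientation-preserving involution on $S^2\times\mathbb{R}$ would have to preserve the orientation of the $S^2$ factor, contradicting freeness by the Lefschetz fixed-point theorem applied to $S^2$. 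The delicate point is the equivariant classification of these involutions up to homotopy, which requires using both Poincar\'e duality and the Postnikov data of $P^+$ carefully.
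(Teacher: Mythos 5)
Your overall strategy (constrain $\pi$ via a graph-of-finite-groups decomposition, then identify $P$ case by case) is the same as the paper's, but your first step has a concrete gap. You claim that free indecomposability of $\pi$ forces $\Gamma$ to reduce to a single vertex with at most one loop, and hence that $\pi\in\{1,\mathbb{Z}/2\mathbb{Z},\mathbb{Z},\mathbb{Z}/2\mathbb{Z}\times\mathbb{Z}\}$. This does not follow from the purely group-theoretic hypotheses you invoke: the group $\mathbb{Z}/2\mathbb{Z}\times F_2$ (one vertex, two loops, all vertex and edge groups of order $2$) has the free subgroup $1\times F_2$ of index $2$, and it is freely indecomposable because its centre is nontrivial while a nontrivial free product has trivial centre. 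So your enumeration misses $\mathbb{Z}/2\mathbb{Z}\times F_n$ for $n\geq2$ (and you must rule these out). Excluding them requires genuine $PD_3$ input — the structure theory of indecomposable $PD_3$-complexes with virtually free fundamental group (Crisp's restriction on centralizers of torsion elements, \cite[Chapters 6 and 7]{Hi}) — which is precisely what the paper's proof is leaning on when it asserts that a nontrivial vertex group forces $\pi=\mathbb{Z}/2\mathbb{Z}$ or $\mathbb{Z}/2\mathbb{Z}\oplus\mathbb{Z}$.

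On the second step, the paper simply cites the classification results \cite[Theorems 5.2 and 5.8]{Hi} to pass from the short list of groups to the list of homotopy types, whereas you attempt to reprove them. Your treatments of $\pi=1$, $\mathbb{Z}/2\mathbb{Z}$ and $\mathbb{Z}$ are plausible sketches of standard arguments, but for $\pi=\mathbb{Z}/2\mathbb{Z}\times\mathbb{Z}$ you explicitly defer ``the delicate point,'' namely the equivariant/Postnikov classification showing $RP^2\times S^1$ is the unique such $PD_3$-complex; that is exactly the content of the cited theorem, so as written this case is not established. The proof would be complete and shorter if you replaced both the missing exclusion of $\mathbb{Z}/2\mathbb{Z}\times F_n$ and the case-by-case homotopy identification with appeals to the relevant results of \cite{Hi}.
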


\begin{proof}
We may assume that $\pi=\pi\mathcal{G}$,
where $(\mathcal{G},\Gamma)$ is a finite graph of finite groups.
If there is a nontrivial vertex group it must have order 2,
and then $\pi=\mathbb{Z}/2\mathbb{Z}$
or $\mathbb{Z}/2\mathbb{Z}\oplus\mathbb{Z}$, 
since all the edge groups have order 2 and $\pi$ is not a proper free product.
If $\pi=\mathbb{Z}/2\mathbb{Z}$ then $P\simeq{RP^3}$.
If $\pi\cong\mathbb{Z}/2\mathbb{Z}\oplus\mathbb{Z}$
then $P\simeq{RP^2}\times{S^1}$.
If all vertex groups are trivial then $\pi\cong\mathbb{Z}$ or 1, 
and so $P\simeq{S^2}\times{S^1}$, $S^2\tilde\times{S^1}$
or $S^3$.
(See \cite[Theorems 5.2 and 5.8]{Hi}.)
\end{proof}

\begin{theorem}
Let $(X,Y)$ be a $PD_4$-pair with connected, non-empty boundary $Y$
and let $j$ be the inclusion of the $Y$ into $X$.
If $\pi_1(j)$ is an isomorphism then $\pi=\pi_1(Y)$ is a free group.
\end{theorem}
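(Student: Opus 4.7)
The plan is to adapt Daverman's approach \cite{Da94}, combining the homological constraint that $[Y]$ is a boundary in $H_3(X;\mathbb{Z}^w)$ with the connected-sum decomposition of $PD_3$-complexes. Write $Y\simeq Y_1\#\cdots\#Y_k$ with each $Y_i$ indecomposable, so $\pi=\pi_1(Y_1)*\cdots*\pi_1(Y_k)$; set $\pi_i=\pi_1(Y_i)$. Since the only fundamental groups of indecomposable $PD_3$-complexes that are free are trivial or infinite cyclic, by Grushko's theorem it suffices to show each $\pi_i$ is free, so I argue by contradiction.

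The essential homological vanishing is $j_*[Y]=0$ in $H_3(X;\mathbb{Z}^w)$ (where $w=w_1(X)=w_1(Y)$ via $\pi_1(j)$), which follows from $[Y]=\partial[X,Y]$ in a $PD_4$-pair together with exactness. The $\pi_1$-isomorphism gives classifying maps with $c_Y\simeq c_X\circ j:Y\to B\pi$, so $(c_Y)_*[Y]=0$ in $H_3(\pi;\mathbb{Z}^w)$. Since $B\pi\simeq\bigvee_i B\pi_i$, this group splits as $\bigoplus_i H_3(\pi_i;\mathbb{Z}^{w_i})$, and via the pinch $Y\to\bigvee_i Y_i$ the class $[Y]$ decomposes as $\sum_i[Y_i]$. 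Hence $(c_{Y_i})_*[Y_i]=0$ in $H_3(\pi_i;\mathbb{Z}^{w_i})$ for each $i$, and the task reduces to showing this class is nonzero whenever $\pi_i$ is not free.

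Three cases remain for non-free $\pi_i$: (a) $\pi_i$ finite nontrivial, where $\widetilde Y_i\simeq S^3$ and $\pi_i$ has periodic cohomology of period dividing $4$, so $(c_{Y_i})_*[Y_i]$ is a generator of the nontrivial cyclic group $H_3(\pi_i;\mathbb{Z}^{w_i})$; (b) $\pi_i$ an infinite $PD_3$-group, where $Y_i$ must be aspherical (the only alternative for an indecomposable $PD_3$-complex with infinite torsion-free $\pi_1$), so $c_{Y_i}$ is a homotopy equivalence and $(c_{Y_i})_*[Y_i]$ generates $H_3(\pi_i;\mathbb{Z}^{w_i})\cong\mathbb{Z}$; and (c) $\pi_i$ infinite, virtually free, but not free. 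Case (c) reduces to (a) via Lemma \ref{Davor} applied to $Y_i$ (if orientable) or to an indecomposable non-free summand of its orientable double cover: the lemma produces a normal finite-index $K\lhd\pi_i$ with a nontrivial finite cyclic free factor $C$, and extending $\pi\twoheadrightarrow\pi_i/K$ by killing the other free factors gives a normal finite-index subgroup $\Pi\lhd\pi$ whose corresponding cover $(\widehat X,\widehat Y)\to(X,Y)$ is again a $PD_4$-pair with $\pi_1$-iso boundary. By Kurosh's subgroup theorem, $\Pi$ retains a conjugate of $C$ as a free factor, so $\widehat Y$ has an indecomposable summand with fundamental group $C$, and case (a) applied to $(\widehat X,\widehat Y)$ yields the desired contradiction.

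The main obstacle is case (c): carefully engineering the finite cover through the free-product structure of $\pi$ so that the lifted $\widehat Y$ has an indecomposable summand with prescribed finite fundamental group, and handling the non-orientable exception $Y_i=RP^2\times S^1$ from Lemma \ref{Davfree} (whose orientable double cover $S^2\times S^1$ has free $\pi_1$ and so escapes the reduction above). For this exceptional case a direct Künneth computation gives $H_3(\mathbb{Z}/2\times\mathbb{Z};\mathbb{Z}^w)\cong\mathbb{Z}/2$ and identifies $(c_{Y_i})_*[Y_i]$ with the generator, yielding the contradiction immediately without appeal to covers.
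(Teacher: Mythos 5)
Your overall strategy --- the vanishing of $(c_Y)_*[Y]$ in $H_3(\pi;\mathbb{Z}^w)$ coming from $[Y]=\partial[X,Y]$, the summand-by-summand analysis via the splitting of $H_3$ of a free product, Daverman's argument for aspherical and finite summands, and the reduction of the virtually free case to the finite case by passing to the finite cover supplied by Lemma \ref{Davor} --- is essentially the paper's. Your treatment of the orientable case, including the explicit construction $\Pi=\ker(\pi\twoheadrightarrow\pi_i/K)$ and the Kurosh argument showing that the cyclic factor $C$ survives as a free factor of $\Pi$, is a correct and usefully more detailed account of what the paper states only as ``pass to the finite covering space associated to a subgroup as in Lemma \ref{Davor}''.

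The gap is in your case (c) when $Y_i$ is non-orientable. You apply Lemma \ref{Davor} ``to an indecomposable non-free summand of its orientable double cover'' and assert that this ``produces a normal finite-index $K\lhd\pi_i$''. It does not: the lemma yields a finite-index subgroup of the fundamental group $A$ of that summand, and $A$ is a free factor of the index-$2$ subgroup $\pi_i^+$, hence in general of infinite index in $\pi_i$; your normal-closure construction and the Kurosh step then have nothing to act on. You also rely on, but do not justify, the claim that $RP^2\times S^1$ is the only non-orientable indecomposable summand with infinite, virtually free, non-free fundamental group that escapes this reduction; that requires the classification of non-orientable indecomposables in \cite[Theorems 7.1 and 7.4]{Hi}. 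The paper avoids both problems by proving the orientable case first, applying it to the orientable double cover $(X^+,Y^+)$ of the \emph{whole pair} to conclude that $\pi^+$ is free, then invoking Lemma \ref{Davfree} and \cite[Theorems 7.1 and 7.4]{Hi} to see that $Y$ is a connected sum of copies of $RP^3$, $RP^2\times S^1$, $S^2\times S^1$ and $S^2\tilde\times S^1$, and finally excluding the first two summand types by rerunning the homological argument with $\mathbb{F}_2$ coefficients (your K\"unneth computation for $RP^2\times S^1$ is the $\mathbb{Z}^w$-analogue of that last step and is correct). Restructuring your argument to take the orientation double cover of the pair at the outset, rather than of individual summands, would close the gap.
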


\begin{proof}
Assume first that $X$ is orientable.
Then $Y$ is also orientable, 
and so it either has an aspherical summand or
$\pi$ is virtually free.
If $Y$ has an aspherical summand the argument of \cite[Theorem A]{Da94}
applies.
If $\pi$ is virtually free then we may pass to the finite covering space
associated to a subgroup as in Lemma \ref{Davor}.
In either case, we obtain a contradiction,
and so $\pi$ must be free.

If $X$ is not orientable then the orientable double cover $(X^+,Y^+)$
satisfies a similar hypothesis, and so $\pi^+=\pi_1(Y^+)$ must be a free group.
It then follows from Lemma \ref{Davfree} and 
\cite[Theorems 7.1 and 7.4]{Hi} that $Y$ must be
a connected sum of copies of $RP^3$, $RP^2\times{S^1}$, $S^2\times{S^1}$ and $S^2\tilde\times{S^1}$.  
We may use the earlier argument with coefficients $F=\mathbb{F}_2$.
Then $X$ and $Y$ are $\mathbb{F}_2$-orientable, 
and $H_3(c_Y;\mathbb{F}_2)=0$, and so we may exclude $RP^3$ and $RP^2\times{S^1}$ summands, as before.
Hence $\pi$ must again be free.
\end{proof}

As observed by Daverman, the argument extends readily to the case when 
$\pi_1(j)$ is a monomorphism with image a retract of $\pi_1(X)$.

We shall give an analogous result for $PD_3$-pairs in \S4 below.

\section{asphericity and $\pi_1$-injectivity}

Relative hyperbolization may be used to show that every 
closed orientable triangulable $n$-manifold is orientable cobordant to
an aspherical $n$-manifold \cite{DJ91}, 
and that every aspherical $n$-manifold which is the boundary 
of a triangulable $(n+1)$-manifold is in fact the $\pi_1$-injective boundary 
of an aspherical $(n+1)$-manifold. 
Similarly, every pair of aspherical $n$-manifolds which are cobordant
together bound an aspherical $(n+1)$-manifold $\pi_1$-injectively
\cite{DJW}; see also \cite[Theorem 5.1]{Ha81}.

In the lowest dimensions $n=1$ or 2 we may avoid hyperbolization
(at least for the orientable cases).
If $n=1$ then $S^1$ is the boundary of the once-punctured torus $T_o$,
and the inclusion of $S^1$ into $T_o$ is $\pi_1$-injective.

If $n=2$  then $T=\partial{T_o\times{S^1}}$,
and the inclusion of $T$ into $T_o\times{S^1}$ is $\pi_1$-injective.
The Klein bottle bounds the mapping torus of 
an orientation-reversing involution of $T_o$. 
The exterior of the $\Theta$-graph in $S^3$ depicted in \cite[Figure 3.10]{Th} 
has a hyperbolic structure for which the boundary is totally geodesic
(and hence incompressible) \cite[Example 3.3.12]{Th}.
The boundary has genus 2, 
and suitable finite cyclic covers are orientable hyperbolic 3-manifolds with 
connected totally geodesic boundary of arbitrary genus $g>1$.

Since every 3-manifold bounds,  every aspherical 3-manifold is the
$\pi_1$-injective boundary of an aspherical 4-manifold,
by the result of \cite{DJW}.
On the other hand,  for most values of $n\geq4$ there are aspherical 
$n$-manifolds which do not bound at all
(since $\Omega_n^{SO}\not=0$ if $n\geq8$).
In particular,  there are $\mathbb{H}^2(\mathbb{C})$-manifolds $M$ 
with the rational cohomology of $\mathbb{CP}^2$ \cite{PY}.
No such $M$ can bound (even as an unoriented $PD_4$-complex),
since $\chi(M)=3$ is odd.
Iterated products of such manifolds give counterexamples 
in all dimensions for which $\Omega_n^{SO}$ is infinite.

\begin{defn}
A $PD_n$-group $G$  bounds injectively if it is a subgroup of a group $\pi$ 
such that  $(\pi,G)$ is a  $PD_{n+1}$-pair of groups. 

Two $PD_n$-groups $G_1$ and $G_2$ are  injectively bordant
if they are subgroups of a group $\pi$ such that $(\pi,G_1,G_2)$ is 
a $PD_{n+1}$-pair of groups.
\end{defn}

Injective bordism of $PD_n$-groups is an equivalence relation.
Reflexivity is displayed by the pair $(G,G,G)$ with $\pi=G_1=G_2=G$, 
symmetry is obvious and transitivity follows from  \cite[Theorem 8.1]{BE}.

It remains an open question whether every finitely presentable $PD_n$-group
is the fundamental group of an aspherical closed $n$-manifold.
However,  we allow the possibility that $G$ or the ambient group $\pi$
of a pair $(\pi,G)$ is of type $FP$,  but not finitely presentable.
For every $n\geq4$ there are uncountable many $PD_n$-groups 
which are $FP$ but not finitely presentable \cite{Le}.
(The case $n=3$ remains open.)

These observations suggest the following questions.
\begin{enumerate}
\item{}Does every $PD_3$-group $G$ bound injectively?
\item{}Is every $PD_n$-group $G$ injectively bordant to a finitely presentable $PD_n$-group?
\end{enumerate}

If every $PD_3$-group is a 3-manifold group
then (1) has a positive answer.
The second question is purely speculative.

\section{manifold 1-skeleton}

A {\it cube with handles} is a 3-manifold
obtained by attaching 1-handles to $D^3$.
A connected $PD_3$-complex $X$ has a {\it manifold 1-skeleton\/} if 
$X\simeq{H}\cup_\phi{Y}$, 
where $H$ is a {\it cube with handles} and the inclusion of $H$ into $X$ 
is 1-connected (i.e., it induces an epimorphism from $\pi_1(H)$ to $\pi_1(X)$),
and $\phi:\partial{H}\to{B}$ is a homotopy equivalence.
It is clear on homological grounds that $Y$ and $B$ must be non-empty,
and $(Y,B)$ is then a $PD_3$-pair.
(See \cite[Proposition 2.7]{Wa}, ignoring the role of simplicity there.)

These notions extend easily to all dimensions,
and if $n\geq4$ then every $PD_n$-complex 
has a manifold 1-skeleton \cite[Lemma 2.8]{Wa}.
(See also \cite[Proposition 2.3]{Ho}.)
This is also the case if $n\leq2$, 
for then every $PD_n$-complex is 
homotopy equivalent to a closed $n$-manifold.
We shall show that the case $n=3$ is exceptional.

\begin{theorem}
Let $(Y,B)$ be a $1$-connected $PD_3$-pair.
Then $Y$ is aspherical and $\pi=\pi_1(Y)$ is a free group.
\end{theorem}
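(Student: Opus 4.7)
Since $(Y,B)$ is $1$-connected, the inclusion $B\hookrightarrow Y$ induces a surjection $\pi_1(B)\twoheadrightarrow\pi=\pi_1(Y)$, and $B$, being a $PD_2$-complex, is a closed surface. My plan proceeds in three stages.

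\emph{Stage 1: reducing $Y$ to a $2$-complex.}  I would first rearrange the CW structure of the pair so that $B$ contains the $0$-skeleton of $Y$, by collapsing arcs in $Y$ from interior $0$-cells into $B$. Then $C_0(\tilde Y,\tilde B)=0$ as a $\mathbb Z[\pi]$-module, where the preimage $\tilde B\subset\tilde Y$ of $B$ in the universal cover is connected by the surjectivity hypothesis. Chain-level Poincar\'e-Lefschetz duality gives a chain equivalence $C^{3-*}(\tilde Y)\simeq C_*(\tilde Y,\tilde B)$; combined with the vanishing of $C_0(\tilde Y,\tilde B)$, this lets one cancel $C_3(\tilde Y)$ against a free summand of $C_2(\tilde Y)$ in the chain-equivalent complex, so $Y$ has the homotopy type of a finite $2$-complex.

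\emph{Stage 2: showing $\pi$ is free.}  Form the double $DY=Y\cup_B Y$, a closed $PD_3$-complex. Van Kampen together with surjectivity of $\pi_1(B)\to\pi$ gives $\pi_1(DY)=\pi$, and $\chi(DY)=2\chi(Y)-\chi(B)=0$. Apply Hillman's connected-sum decomposition to write $DY\simeq X_1\#\cdots\# X_n$ with each $X_i$ indecomposable; then $\pi$ splits as the corresponding free product of the groups $\pi_1(X_i)$. It remains to rule out the two exotic families of indecomposable summands---aspherical $PD_3$-complexes (whose $\pi_1$ is a $PD_3$-group) and the virtually-free-$\pi_1$ complexes classified in Lemmas~\ref{Davor}--\ref{Davfree}. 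The $2$-dimensionality of $Y$ from Stage~1 is the key lever: a Mayer-Vietoris computation on $DY$ shows that the contribution of each non-sphere-bundle summand to the equivariant cohomology of $\tilde Y$ is incompatible with $Y$ having $2$-complex type. In particular, a $PD_3$-group free factor of $\pi$ would force nontrivial third-degree equivariant cohomology on the corresponding piece of $Y$ inside $DY$, and the torsion coming from dihedral vertex groups of a virtually-free summand would force $\tilde Y$ to contain $\pi/F$-equivariant spheres for a nontrivial finite cyclic $F\le\pi$. Hence every $X_i$ is $S^3$, $S^2\times S^1$, or $S^2\tilde\times S^1$, and $\pi$ is free.

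\emph{Stage 3: asphericity.}  Half-lives-half-dies applied to the $PD_3$-pair gives $\chi(Y)=\chi(B)/2$ and $b_1(Y)=\tfrac12 b_1(B)$, so in the orientable case $\chi(Y)=1-g$ and the rank of the free group $\pi$ equals the genus $g$ of $B$. A direct cell count then shows a finite $2$-complex with $\pi_1\cong F_r$ and $\chi=1-r$ is homotopy equivalent to $\bigvee_r S^1$, which is aspherical. The non-orientable case reduces to the orientable one via the orientable double cover. The principal obstacle will be Stage~2: rigorously excluding each exotic indecomposable summand of $DY$. The $2$-dimensionality of $Y$ supplies the essential leverage, but translating it into vanishing constraints on each free-product factor of $\pi$ requires careful Mayer-Vietoris analysis of $DY=Y\cup_B Y$ combined with the detailed information about virtually-free $PD_3$-complex fundamental groups from Lemmas~\ref{Davor}--\ref{Davfree}.
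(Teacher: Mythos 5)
There is a genuine gap, and it sits exactly where you predict: Stage~2. Your stated lever --- ``$Y$ has the homotopy type of a finite $2$-complex'' --- is too weak to exclude the exotic indecomposable summands of $DY$, because \emph{every} finitely presentable group is the fundamental group of a finite $2$-complex; knowing $\dim Y\leq 2$ places no restriction whatsoever on $\pi$, so it cannot by itself rule out torsion (needed to exclude the virtually-free summands of Lemmas~\ref{Davor}--\ref{Davfree} and the finite-$\pi_1$ summands) or $PD_3$-group free factors. The Mayer--Vietoris ``incompatibility'' you invoke is asserted rather than carried out, and transferring equivariant information about $\widetilde{DY}$ back to constraints on $\widetilde Y$ is precisely the hard part. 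Stage~3 also has the logic inverted (you need asphericity, or at least $\pi_2(Y)$ stably free of computable rank, which requires a Schanuel-type module argument over $\mathbb{Z}[F_r]$, not a ``direct cell count''), and it presupposes the unproved Stage~2.

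The irony is that your Stage~1 already contains everything needed, and you stop one degree short. Since $(Y,B)$ is $1$-connected, $Y$ is obtained from $B$ (up to homotopy) by attaching cells of dimension $>1$, so $H_i(Y,B;\Gamma)=H^i(Y,B;\Gamma)=0$ for $i\leq 1$, where $\Gamma=\mathbb{Z}[\pi]$. Lefschetz duality then gives $H_j(Y;\Gamma)\cong H^{3-j}(Y,B;\Gamma)=0$ for all $j>1$; since $\widetilde Y$ is simply connected with no homology above degree $1$, it is contractible and $Y$ is aspherical. The same duality with an arbitrary left module $\mathcal{M}$ gives $H^j(Y;\mathcal{M})\cong H_{3-j}(Y,B;\mathcal{M})=0$ for $j>1$, so $c.d.\,\pi\leq 1$ and $\pi$ is free by Stallings--Swan. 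This is the paper's entire proof; your doubling construction, the connected-sum decomposition of $DY$, and the appeal to the classification of indecomposable $PD_3$-complexes are all unnecessary detours, and the one step on which they all depend is the one you have not supplied.
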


\begin{proof}
Let $\Gamma=\mathbb{Z}[\pi]$.
Since $(Y,B)$ is 1-connected, 
$Y$ may be obtained (up to homotopy equivalence) 
by adding cells of dimension $>1$ to $B$.
Therefore $H^i(Y,B;\Gamma)=H_i(Y,B;\Gamma)=0$, for $i\leq1$.
Hence $H_j(Y;\Gamma)=0$  for $j>1$,  by Poincar\'e duality,
and so $Y$ is aspherical.
Similarly,  if $\mathcal{M}$ is any left $\mathbb{Z}[G]$-module
then $H^j(Y;\mathcal{M})=0$ for $j>1$,
and so $c.d.\pi\leq1$.
Thus $\pi$ is a free group.
\end{proof}

\begin{cor}
Let $X$ be a connected $PD_3$-complex with a manifold $1$-skeleton.
Then $X$ is homotopy equivalent to a closed $3$-manifold.
\end{cor}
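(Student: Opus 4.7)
The plan is to apply the preceding Theorem to the pair $(Y,B)$, realise $Y$ by a genuine handlebody, and then present $X$ as a Heegaard splitting.

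First I would unwind the definition of manifold $1$-skeleton: write $X\simeq H\cup_\phi Y$ with $H$ a cube with handles of some genus $g$, $\phi\colon\partial H\to B$ a homotopy equivalence, $(Y,B)$ a $PD_3$-pair, and the inclusion $H\hookrightarrow X$ inducing a surjection on $\pi_1$.  A van Kampen argument converts the last condition into surjectivity of $\pi_1(B)\to\pi_1(Y)$, so $(Y,B)$ is a $1$-connected $PD_3$-pair.  The preceding Theorem then gives that $Y$ is aspherical and $\pi=\pi_1(Y)$ is a free group.  Since $B\simeq\partial H$ is a closed surface of genus $g$, comparing Euler characteristics via $2\chi(Y)=\chi(B)$ (Poincar\'e--Lefschetz duality plus the long exact sequence of the pair) shows that the rank of $\pi$ equals $g$.

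Now fix an actual handlebody $H'$ of genus $g$.  Both $(Y,B)$ and $(H',\partial H')$ are aspherical $PD_3$-pairs with matching fundamental groups $(F_g,\pi_1(B))$.  The central step is to upgrade this matching of data to a homotopy equivalence of pairs $(Y,B)\simeq(H',\partial H')$.  Granting this, composing with $\phi^{-1}$ produces a homotopy equivalence $\partial H\to\partial H'$ of closed surfaces, which by Nielsen--Baer is homotopic to a homeomorphism $\psi$, whence
\[
X\;\simeq\; H\cup_\phi Y\;\simeq\; H\cup_\psi H',
\]
a closed $3$-manifold (a genus-$g$ Heegaard splitting).

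The main obstacle is the relative equivalence $(Y,B)\simeq(H',\partial H')$.  Asphericity supplies a map $Y\to H'$ realising the isomorphism on $\pi_1$, but one must further match the two inclusion-induced surjections $\pi_1(B)\twoheadrightarrow F_g$ up to an automorphism of $\pi_1(B)$.  I expect this to reduce to a classification of $PD_3$-pair-of-groups structures on $(F_g,\pi_1(B))$: Poincar\'e--Lefschetz duality should force the kernel of the surjection to be normally generated by $g$ disjoint simple closed curves forming a cut system, and cut systems are transitive under the mapping class group of the surface, so the pair-of-groups datum is unique up to automorphism and is realised by the standard handlebody pair.  In the non-orientable case one would additionally track the orientation character on $\pi$ and use an appropriate non-orientable analogue of the handlebody pair.
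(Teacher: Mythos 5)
Your proposal follows essentially the same route as the paper: pass to the $1$-connected $PD_3$-pair $(Y,B)$ via van Kampen, apply the preceding Theorem to get $Y$ aspherical with free fundamental group, identify $(Y,B)$ with a handlebody pair $(H',\partial H')$, and use the Dehn--Nielsen theorem to replace the boundary homotopy equivalence by a homeomorphism, exhibiting $X$ as a Heegaard-type union $H\cup_\Phi H'$. The one step you single out as the main obstacle, the relative equivalence $(Y,B)\simeq(H',\partial H')$, is exactly what the paper disposes of by citing Theorem 3.12 of \cite{Hi} (and its non-orientable extension), and your sketch via cut systems and the transitivity of the mapping class group is in the spirit of how that result is established.
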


\begin{proof}
By assumption, $X\simeq{H}\cup_\phi{Y}$, 
where $H$ is a cube with handles
and $(X,H)$ is a 1-connected pair.
Since $H$ is a cube with handles, 
$\partial{H}$ is a connected closed surface.
The inclusion of $B$ into $Y$ is also 1-connected, 
by excision and the Van Kampen Theorem.

Since $(Y,B)$ is a $PD_3$-pair,
$Y$ is aspherical and $\pi_1(Y)$ is a free group, 
by Theorem 5.
Hence there is a homotopy equivalence $f:(Y,B)\simeq(H',\partial{H'})$,
where $H'$ is a second cube with handles, 
by \cite[Theorem 3.12]{Hi} and its
extension to the non-orientable case \cite[page 38]{Hi}.
Since $\partial{H}$ and $\partial{H'}$ are closed surfaces, 
the homotopy equivalence $f\circ\phi:\partial{H}\to\partial{H'}$
is homotopic to a homeomorphism $\Phi$,
by the Dehn-Nielsen Theorem \cite[Theorem 5.6.1]{ZVC}, 
and so $X$ is homotopy equivalent to the 3-manifold $H\cup_\Phi{H'}$.
\end{proof}

There are finite $PD_3$-complexes which are not homotopy equivalent 
to 3-manifolds. 
The first and simplest such example has fundamental group $S_3$.,
and was constructed by R. G. Swan, in another context, 
before the notion of $PD_n$-complex had been defined.
(See \cite[Chapters 5 and 6]{Hi}, and the references there.)

\end{document}